\newtheorem{lem}{Lemma}
\newtheorem{prop}{Proposition}
\newtheorem{rem}{Remark}
\newtheorem{thm}{Theorem}
\newenvironment{proof}{{\noindent\it{Proof}}.}{\hfill$\Box$}
\newcommand{\R}{\mathbb{R}}
\newcommand\ddfrac[2]{\frac{\displaystyle #1}{\displaystyle #2}}
\title{On persistence of a Nicholson-type system with multiple delays and nonlinear harvesting}
\author{
{Pablo Amster}
\thanks{E-mail: \texttt{pamster@dm.uba.ar}}\\%
{Melanie Bondorevsky}%
\thanks{E-mail: \texttt{mbondo@dm.uba.ar}}\\%
Departamento de Matem\'atica - FCEyN\\ Universidad de Buenos Aires \& IMAS-CONICET\\ 
}
\date{}
\begin{document}

\maketitle


\begin{abstract}

An $N$-dimensional generalization of  Nicholson's equation is analyzed. We consider a model including multiple delays, nonlinear coefficients and a nonlinear harvesting term.
{Inspired by previous results in this subject,} 
 we obtain sufficient conditions to guarantee strong and uniform persistence. 
Furthermore, under extra suitable hypotheses we prove the existence of $T$-periodic solutions and, reversing the prior conditions in a convenient manner, we  show that the zero is a global attractor.
\end{abstract}




\section{Introduction}


The Nicholson equation was originally proposed in 1980 by Gurney, Blythe and Nisbet for a population model, 
though it was actually named after the entomologist A.J. Nicholson \cite{N}, who studied a family population of Lucilia cuprina flies in 1954. 
The classical equation is defined as follows
\begin{equation}\label{nich}
x'(t)=-d x(t) + p x(t-\tau)e^{-ax(t-\tau)} \qquad \hbox{ $t\geq 0$,}
\end{equation} 

\noindent where the function $x(t)$ describes the population density,
the constant $d>0$ represents the individual mortality rate, 
$p>0$ stands for the highest rate of egg production, 
$a^{-1}>0$ it is the value at which the function $g(x)=xe^{-ax}$ shifts its monotonicity and $\tau>0$ refers to the expected duration of the maturation process. 
For a detailed model deduction see \cite{Gurney} and for a wide discussion see \cite{BBI}.  

\medskip



In this work we study an $N$-dimensional generalization of  Nicholson's equation,
namely

\begin{equation}\label{nich-sys-gral}
x_i'(t)=-d_i(t, x_i(t)) + \sum_{l=1, l\neq i}^{N} b_{i,l}(t, x_l(t))+ \sum_{j=1}^{k} p_{i,j}(t) f(x_{i,\tau_j}(t)) -H_i(t,x(t))
\end{equation}

\noindent where the vector $x=(x_1,...,x_N)$ represents the $N$ different populations over time, 
$d_i(t, x_i)$ are the mortality rates, 
$b_{i,l}(t, x_i)$ are mutualism terms, which will be interpreted as the collaborations between $x_l$ and $x_i$,
and $p_{i,j}(t)$ are functions of production rate for each delay $\tau_j>0$.
Throughout the paper, we shall denote by $x_{i,\tau_j}(t)$ the delayed coordinate $x_{i}(t-\tau_j)$ and set $f(y):=ye^{-y}$.
We shall consider, for each $i=1,\ldots, N$, state-dependent  harvesting terms $H_i(t, x_1,..., x_N)$.
In our setting, an initial condition for the system (\ref{nich-sys-gral}) shall be given by a continuous function
$\varphi:[-\tau^{*},0]\to[0,+\infty)^N$, where $\tau^{*}=\max\{\tau_j\}$. 

\medskip



We shall focus our attention, on the one hand, on the generalization of the original scalar  
equation to $N$-dimensions and, on the other hand, on the effect of the nonlinear harvesting terms. 
For a more comprehensive presentation, we shall begin our analysis with the scalar case, which will guide the study of the $N$-dimensional system.
More specifically, for the general case we shall use guiding function techniques (see \cite{AB}) that will allow us to reduce the approach to a one-dimensional problem. 
It is worth mentioning that 
the treatment is different  with respect to the scalar equation, due to the presence of the mutualism terms, which do not allow the application of  uncoupling techniques.

In order to analyze the first case, we shall take into account  previous results, such as the work of Berezansky, Braverman and Idels \cite{BBI} and extend it to our framework.
In more precise terms, we will guarantee strong and uniform persistence by imposing conditions over an auxiliary function $G^0$,  that shall be defined below.  
If we reduce the problem by considering linear terms only, the conditions on $G^0$ can be translated into those described by the authors.
Here, we shall assume instead that the functions are asymptotically linear and, using this fact, accurate assumptions 
will allow us to obtain  the results. 
In addition, from the persistence analysis and, considering upper bounds for the solutions, we shall prove the existence of $T$-periodic solutions 
by means of degree theory and 
an accurate adaptation of the Poincaré-Miranda theorem.
Finally, we shall show that if  the conditions for persistence are reversed in a particular way, then zero is a global attractor for the positive trajectories.  

\medskip



It is worth recalling that Nicholson's equation and extensions
have been studied by different  authors. 
Earlier work includes: \cite{AD2}, \cite{AD}, \cite{BB}, \cite{CH}, \cite{FA}, \cite{FA2}, \cite{LD}, \cite{LG}, \cite{OS}, \cite{ST} and references therein. 
A number of cases involving multiple delays, nonlinear coefficients and linear harvesting terms have been also explored.


For instance, 
Obaya and Sanz \cite{OS} studied the persistence of an almost periodic form of a general Nicholson type equation:  
\begin{equation}\label{obaya}
X'(t)=-M(t) X(t) +D(t) X(t)+B(t,X(t-\tau))\quad\text{if $t\geq 0$.}
\end{equation}
 
\noindent In this framework, $M(t)$ is an $N\times N$ diagonal matrix of continuous functions corresponding to the time–varying mortality rate, 
$D(t)$ is an $N\times N$ matrix of continuous functions corresponding to the time–varying dispersion of the model, 
and the function $B:[0,+\infty)\times\R^N\to\R^N$ is the birth rate.  
The authors establish sufficient conditions for uniform and strict persistence in the case of skew-product semi-flows generated by solutions of 
families of cooperative DDE systems with time-recurring behavior, in terms of the principal spectra of some associated linearized skew-product semi-flows, which admit continuous separation. 
Their results are applied to an almost periodic version of (\ref{obaya}), which is a locally cooperative system, and show that its persistence is equivalent to the persistence of a linearization of the system around the null solution.


Also, we highlight the work of Faria, Obaya and Sanz \cite{Faria18}, that extends the above results to the nonautonomous framework, exploiting the properties of the cooperative linear ODEs  system $X'(t)=[M(t)-D(t)]X(t)$, and assuming conditions to guarantee the exponential globally stability. 
Finally, comparison techniques are applied to an auxiliary cooperative system in order to obtain, among other results, sufficient conditions for the persistence of the system. 


In \cite{FA},  Faria studied several population models with continuous delays and showed the existence of at least one periodic solution via Schauder's fixed point theorem. The proof relies on two crucial dynamical properties of the system: dissipativeness and uniform persistence.


In a recent work from Ossandon and Sepúlveda \cite{OSE}, 
they prove the existence of positive solutions of periodic variants of (\ref{obaya}) by the means of topological methods.  

Additional results for  Nicholson-type equations were analyzed by the first author, together with Balderrama and Idels \cite{ABI}  and D\'eboli \cite{AD}. 

In our previous work \cite{AB}, 
 sufficient conditions were obtained to guarantee the uniform persistence in systems of differential equations with delay, motivated by population models. 
To this end,   a guiding-type function was considered and, under  different assumptions, several results from the ODE theory were successfully extended to the DDE framework. 
Moreover,  the persistence result was then applied to obtain sufficient conditions for the existence of an invariant subset of the state-space containing all possible   periodic solutions. 

\medskip


\subsection{Discussion of the main results}

In this work, persistence and existence of periodic solutions of the Nicholson equation are proved applying guiding functions techniques.
This method was already explored in our previous paper \cite{AB} for abstract systems. 
Here, we investigate in detail the application of the procedure to this  specific context, with the aim of obtaining 
more explicit conditions. 
In contrast with our previous approach, we shall employ here nonsmooth functions $v(t):=\min\{x_i(t)\}$ and $u(t):=\max\{x_i(t)\}$ which, as shown below, allow to avoid the use of more  technical tools.
An additional advantage of the method is the fact that  persistence for the $N$-dimensional system can be proven  via a reduction to a one-dimensional problem. 
We shall deal with  nonlinear  harvesting and mortality terms, thus extending previous results in the literature for the $N$-dimensional case.

Moreover, it shall be  shown how persistence 
can be employed to obtain conditions for the existence of periodic solutions.
To this end, degree theory techniques  and  the Poincaré-Miranda theorem shall be applied.

\medskip



\subsection{Outline}

This paper is organized as follows. 
In the introduction, we discuss in detail the framework of the equation and some previous results that are relevant to our analysis.
We start with a  preliminary section, in which  notation and some basic definitions are introduced. 
In section \ref{persist-sec}, we analyze persistence of the Nicholson's equation in the scalar and $N$-dimensional cases.
In  section  \ref{periodic}, we prove the existence of $T$-periodic solutions of the system.
Finally, the attractiveness of the trivial equilibrium is studied in section \ref{atract}.


\section{Preliminaries}

Let us introduce some definitions and notation, following  \cite{AB} and \cite{gard}.
Let us consider the delay differential equation
\begin{equation}
\label{eq}
x'(t)=f(t,x(t),x(t-\tau_j))
\end{equation}
\noindent where $f:[0,+\infty)\times[0,+\infty)^{2N}\to \R^N$ is continuous and 
$\tau_j>0$ are the delays for $j=1,...,k$.
Throughout this paper, we shall consider only positive initial conditions for (\ref{eq}), namely functions $\varphi\in 
C([-\tau^*,0],\R^N)$  with $\varphi_i\ge 0$, where $\tau^{*}=\max\{\tau_j\}$. 
Thus, the  initial value at $t=0$ shall be written as
\begin{equation}
\label{ic}
x_0=\varphi,    
\end{equation}
where, for arbitrary $t$, the mapping $x_t\in C([-\tau^*,0],\R^N)$ is defined, as usual,  by $x_t(s):=x(t+s)$.


Since we are interested in positive solutions, we shall work on the  
positive cone of $\R^N$, that is 
$$\R^N_{+}:=\{x \in\R^N: x_i\geq 0, 1\leq i\leq N\}$$ 
equipped with  $|\cdot|$, the Euclidean norm of $\R^N$.


\medskip 

We shall say that the solutions of system (\ref{eq}) are:
\begin{itemize}
    \item \emph{strongly persistent} if 
    $$\liminf_{t\to+\infty} |x(t)|>0,\quad \forall\, x \in  \R^N_{+}\backslash\{0\}.$$
    
    \item \emph{uniformly persistent} if there exists $\varepsilon>0$ such that
    $$\liminf_{t\to+\infty} |x(t)|>\varepsilon,\quad   \forall\, x \in \R^N_{+}\backslash\{0\}.$$
\end{itemize}


In order to study periodic solutions, the following  notation shall be adapted from \cite{AD}. Let $C_T$ be the Banach space of continuous $T$-periodic vector functions, with the standard norm $\|\cdot \|_\infty$. We shall focus our attention on the set $X\subset C_T$ defined as the interior of the positive cone $\mathcal{C}$, namely
$$
X:=\{x(t)\in C_T : x_i(t)>0,{} 1\leq i\leq N\}.
$$


\section{Persistence in Nicholson's equation}

\label{persist-sec}

This section  shall be essentially based on the technique of guiding functions, which  was already used in our previous work \cite{AB}. 
A relevant fact in the present approach is that  the analysis of the system can can be reduced to a one-dimensional case. In more precise terms, our proof of strong persistence of the non-trivial solutions shall rely on the choice, as guiding function, of the function $\min\{x(t)\}$, thus allowing to apply  the results  of the case  $N=1$. 
 This is why, for a more comprehensive presentation, we shall firstly deal with the scalar Nicholson equation with nonlinear terms and give conditions for strong persistence and uniform   persistence. In a second subsection, we shall proceed with the general  $N$-dimensional system.

The main result of this section concerns the  uniform persistence of the solutions for the Nicholson's $N$-dimensional equation. The proof will be carried out after proving the  strong persistence as a first step. 


\subsection{Persistence in the scalar case} \label{persist}

We shall consider a scalar equation with  several delays and nonlinear harvesting. As an extension of previous results in this topic, we shall also consider a nonlinear mortality rate. 
Although the harvesting and mortality terms could be unified, we shall keep them separate for a better understanding of the $N$-dimensional case.

Our version of the scalar Nicholson equation reads as follows:  
\begin{equation}
\label{escalar}
x'(t)=  -d(t, x(t)) +\sum_{j=1}^{k} p_j(t) f(x_{\tau_j}(t))- H(t,x(t))
\end{equation} 
Here, the functions $d(t, x), H(t,x), p_j(t)$ are continuous and nonnegative.

Prior to our analysis, it is   worthy mentioning that the 
conditions obtained below can be regarded as natural, in the sense 
that extend previously known conditions obtained when $d$ and $H$ are linear functions.  
For the general case,  let us set $G(t,x):=d(t,x)+H(t,x)$,  assume that $\frac{G(t,x)}{x}$ is bounded from above and define
\begin{equation*}
\begin{split}
G^{0}(t) &:=\limsup_{x\to0^{+}}  \frac{d(t, x)+ H(t,x)}{x}.\\
\end{split}
\end{equation*}
Furthermore, we shall assume that the limit  is uniform in $t$. 
In particular,
this holds when $d(t,0)=H(t,0)$, provided that $d$ and $H$ are smooth functions.
\medskip

\begin{rem}
It is easy to see that if the (scalar) initial condition satisfies $\varphi>0$, then $x_t>0$ for all $t$. 
\end{rem}


\begin{rem}
Let us notice that since $\frac{G(t,x)}{x}$ is bounded from above, there exists $k_0$, such that $G(t,x(t))\le k_0 x(t)$.
This, in turn, implies that $x(t) \ge e^{-k_0\tau^*} x_{\tau_j}(t)$ for each $1\leq j\leq k$.

\noindent Indeed, since $x'(t)\geq -k_0 x(t)$, we have that $\ln x(t) - \ln x_{\tau_j}(t) \ge -k_0\tau_j$, whence  $x(t) \ge e^{-k_0\tau^*} x_{\tau_j}(t)$.
\end{rem}


\medskip

In order to guarantee strong persistence in the scalar case, the following condition  shall be assumed:
\begin{equation}\label{H0}
G^0(t)<(1-\delta) \sum_{j=1}^{k} p_j(t)
\end{equation}
for some $\delta<1$ and all $t$. 
  With this in mind, let us establish our first result:

\begin{prop}\label{prop1}
Assume (\ref{H0}) holds,
then the solutions of (\ref{escalar}) are strongly persistent.
\end{prop}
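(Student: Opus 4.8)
The plan is to argue by contradiction. Suppose some positive solution $x$ of (\ref{escalar}) satisfies $\liminf_{t\to+\infty}x(t)=0$, and show that this is incompatible with (\ref{H0}). By Remark 1 the solution stays strictly positive, so the value $0$ can only be approached, never attained; I will exploit this by evaluating the equation at \emph{record-low} times. Concretely, pick $s_n\to+\infty$ with $x(s_n)\to0$ and let $t_n\in[0,s_n]$ be a point at which $x$ attains its minimum over $[0,s_n]$. Then $x(t_n)\to0$ and $x(t_n)=\min_{[0,t_n]}x$; moreover $t_n\to+\infty$, since otherwise a subsequence would converge to some finite $t^*$ with $x(t^*)=0$, contradicting positivity. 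Finally, because $t_n$ realizes a minimum over $[0,t_n]$ (taking an interior minimizer if necessary), we have $x'(t_n)\le0$.

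The next step is to squeeze the delayed arguments at these times, which is the crux of the argument. On one hand, minimality gives $x_{\tau_j}(t_n)=x(t_n-\tau_j)\ge x(t_n)$ for every $j$ and all large $n$ (so that $t_n-\tau_j\ge0$). On the other hand, Remark 2 gives $x_{\tau_j}(t_n)\le e^{k_0\tau^*}x(t_n)$. Hence
\begin{equation*}
x(t_n)\le x_{\tau_j}(t_n)\le e^{k_0\tau^*}x(t_n)\to0 ,
\end{equation*}
so all relevant delayed values collapse to $0$ together with $x(t_n)$. Since $f(y)=ye^{-y}$, this lets me linearize the production term: using $x_{\tau_j}(t_n)\ge x(t_n)$ and $e^{-x_{\tau_j}(t_n)}\ge e^{-e^{k_0\tau^*}x(t_n)}=:c_n$ with $c_n\to1$, I obtain $f(x_{\tau_j}(t_n))\ge c_n\,x(t_n)$ for every $j$.

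It remains to insert the asymptotic linearity of the loss term. Fix $\varepsilon>0$; by the uniformity of the limit defining $G^0$ there is $\eta>0$ with $G(t,y)\le(G^0(t)+\varepsilon)y$ for all $t$ and $0<y<\eta$, and $x(t_n)<\eta$ for $n$ large. Evaluating (\ref{escalar}) at $t_n$, recalling $G=d+H$, and using $x'(t_n)\le0$,
\begin{equation*}
0\ge x'(t_n)\ge -(G^0(t_n)+\varepsilon)\,x(t_n)+c_n\,x(t_n)\sum_{j=1}^{k}p_j(t_n).
\end{equation*}
Dividing by $x(t_n)>0$ gives $c_n\sum_{j=1}^{k}p_j(t_n)\le G^0(t_n)+\varepsilon$, and combining with (\ref{H0}) yields $\bigl(c_n-(1-\delta)\bigr)\sum_{j=1}^{k}p_j(t_n)\le\varepsilon$. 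Since $c_n\to1$ and $\delta>0$ is fixed, letting $n\to\infty$ and then $\varepsilon\to0$ forces the production rate $\sum_j p_j(t_n)$ to vanish along the sequence, in conflict with the strict gap imposed by (\ref{H0}); this is the contradiction that establishes $\liminf_{t\to+\infty}x(t)>0$. I expect the main obstacle to be precisely this last point: guaranteeing that the two-sided control of the delayed terms (lower bound from minimality, upper bound from Remark 2) together with the uniformity of $G^0$ preserves the strict inequality (\ref{H0}) in the limit, so that the gap $\delta$ genuinely rules out the degeneration of the production along the record-low sequence.
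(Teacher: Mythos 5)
Your argument tracks the paper's proof almost step for step: the record-low sequence $t_n\to+\infty$ with $x'(t_n)\le 0$, the lower bound $x_{\tau_j}(t_n)\ge x(t_n)$ from minimality, the upper bound $x_{\tau_j}(t_n)\le e^{k_0\tau^*}x(t_n)$ from Remark 2, and the linearization $f(x_{\tau_j}(t_n))\ge c_n x(t_n)$ with $c_n\to 1$ (the paper instead uses monotonicity of $f$ on $[0,1]$, which is a cosmetic difference; your direct exponential bound is fine). The genuine gap is in your endgame. From $\bigl(c_n-(1-\delta)\bigr)\sum_j p_j(t_n)\le\varepsilon$ you correctly deduce $\sum_j p_j(t_n)\to 0$, but then assert this is ``in conflict with the strict gap imposed by (\ref{H0})''. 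It is not: (\ref{H0}) is a \emph{pointwise} strict inequality. Since $G^0(t)\ge 0$, it does force $\sum_j p_j(t)>0$ for each $t$, but it imposes no uniform positive lower bound, so nothing prevents $\sum_j p_j(t_n)\to 0$ along your sequence. A uniform bound $\sum_j p_j(t)\ge c$ is precisely the \emph{additional} hypothesis (\ref{p-bounded inf}) that the paper introduces only afterwards, for the uniform persistence result (Theorem \ref{teo1}); it is not available in Proposition \ref{prop1}, so your concluding sentence produces no contradiction in the regime where $\sum_j p_j(t)\to 0$ as $t\to+\infty$.

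The way the paper closes the argument — and the repair you need — is to extract the contradiction at a \emph{fixed} large $n$ rather than in the limit. From (\ref{H0}) together with the uniformity of the limit defining $G^0$, one passes to the pointwise inequality $\frac{G(t_n,x(t_n))}{x(t_n)}<(1-\delta)\sum_j p_j(t_n)$ for $n$ large, and then, once $e^{-x(t_n)}>1-\delta$,
\[
0\ \ge\ x'(t_n)\ \ge\ x(t_n)\left[\sum_{j=1}^{k}p_j(t_n)e^{-x(t_n)}-\frac{G(t_n,x(t_n))}{x(t_n)}\right]\ >\ 0,
\]
where the strictness at this single $n$ requires only $\sum_j p_j(t_n)>0$, which (\ref{H0}) does supply pointwise. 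Your passage to the limit ($n\to\infty$, then $\varepsilon\to 0$) trades this strict pointwise inequality for an asymptotic one, and at that stage the fixed gap $\delta$ can no longer rule anything out unless $\sum_j p_j$ is bounded away from zero. (In fairness, your additive-$\varepsilon$ bookkeeping is the scrupulous reading of the uniform limsup, and under that reading even the pointwise transfer needs $\varepsilon<\bigl(c_n-(1-\delta)\bigr)\sum_j p_j(t_n)$; but measured against the paper's own proof, the step you are missing is this pointwise transfer of (\ref{H0}) to $G(t,x)/x$ for small $x$, and the step that fails is the claimed contradiction from $\sum_j p_j(t_n)\to 0$.)
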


\begin{proof}
Suppose that  $x(t)$ is a non-trivial solution such that $\liminf_{t\to+\infty} x(t)= 0$
and fix $t_n\nearrow+\infty$ such that $x(t_n)=\min_{t\in[t_{n-1},t_n]}x(t)$.
This implies that $\lim_{n\to+\infty}x(t_n)=0$, $x'(t_n)\le 0$ and  $x(t_n)\leq x_{\tau_j}(t_n)$ for $n$ large enough. 
In turn, we get that $f(x(t_n))\le f(x_{\tau_j}(t_n))$ because,  for $x(t_n)\le e^{-k_0\tau^*}$ it is seen  that $x_{\tau_j}(t_n)\le 1$.  
Thus,
$$
\begin{array}{cc}
 \sum_{j=1}^{k} p_j(t_n) f(x_{\tau_j}(t_n))
\geq 
\sum_{j=1}^{k} p_j(t_n) f(x(t_n))\,\hbox{ when $n\gg0$}. 
\end{array}
$$
From (\ref{H0}), we deduce that 
$$
\frac{G(t_n, x(t_n))}{x(t_n)} <(1-\delta)\sum_{j=1}^{k} p_j(t_n)\,\hbox{ for $n$ large.}
$$
Moreover, we can find $n_0$ such that $e^{-x(t_n)}>1-\delta$ for $n\geq n_0$; thus,  the following contradiction yields
$$ 
0\geq x'(t_n)\geq x(t_n)\left[\sum_{j=1}^{k} p_{j}(t_n)e^{-x(t_n)} -\frac{G(t_n,x(t_n))}{x(t_n)}\right]>0
$$
\noindent provided that $x(t_n)< \min\{e^{-k_0\tau^*}, -\ln(1-\delta)\}$.
\end{proof}

\medskip


However, notice that the previous condition is not enough to guarantee a uniform bound such that  all the solutions  
stay asymptotically away from zero.  
Specifically, it might be the case that the distance between $x(t)$ and $x(t-\tau_j)$ remains large.
This problem is avoided by  the next assumption:

\medskip 
There exists $c>0$ such that 
\begin{equation}\label{p-bounded inf}
    \sum_{j=1}^{k} p_{j}(t)\ge c
\end{equation}
for all $t$. 
\medskip
 It is readily verified that, together with (\ref{H0}), condition  
 (\ref{p-bounded inf}) implies the existence of $r_0>0$  
such that, for each $r\in (0,r_0)$,
\begin{equation}\label{H1_uni}
\liminf_{t\to+\infty, x, y\to r^-} \sum_{j=1}^{k} p_{j}(t) f(y) -G(t,x)>0.
\end{equation}   

\medskip

\begin{thm}\label{teo1}
Assume that (\ref{H0}) and (\ref{p-bounded inf}) hold,
then the solutions of (\ref{escalar}) are uniformly persistent.
\end{thm}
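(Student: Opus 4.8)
The plan is to bootstrap from the strong persistence already established in Proposition~\ref{prop1}. By that result, every non-trivial positive solution satisfies $\liminf_{t\to+\infty}x(t)>0$, so the trajectory eventually avoids a neighborhood of zero; what remains is to produce a \emph{uniform} lower bound $\varepsilon$, independent of the particular solution. The key new ingredient is condition (\ref{p-bounded inf}), which via (\ref{H1_uni}) furnishes a threshold $r_0>0$ such that, for every $r\in(0,r_0)$, the quantity $\sum_{j=1}^k p_j(t)f(y)-G(t,x)$ stays bounded away from zero (uniformly in $t$) as $x,y\to r^-$. The strategy is to argue that no solution can have its $\liminf$ below this uniform threshold.

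First I would fix a value $\varepsilon\in(0,r_0)$ and argue by contradiction, supposing some non-trivial solution $x(t)$ satisfies $\liminf_{t\to+\infty}x(t)<\varepsilon$. As in Proposition~\ref{prop1}, I would select a sequence $t_n\nearrow+\infty$ with $x(t_n)=\min_{t\in[t_{n-1},t_n]}x(t)$, so that $x'(t_n)\le 0$ and $x(t_n)\le x_{\tau_j}(t_n)$ for large $n$. The difference from the previous proof is that now $x(t_n)$ need not tend to zero; instead it accumulates at some value $\le\varepsilon<r_0$. The idea is to apply (\ref{H1_uni}) with $r=\varepsilon$: since $x(t_n)\le x_{\tau_j}(t_n)$ and, by the monotonicity of $f=ye^{-y}$ on $[0,1]$ (Remark~2 guarantees $x_{\tau_j}(t_n)\le 1$ once $x(t_n)\le e^{-k_0\tau^*}$), one has $f(x_{\tau_j}(t_n))\ge f(x(t_n))$, giving
$$
x'(t_n)\ge \sum_{j=1}^k p_j(t_n)f(x(t_n)) - G(t_n,x(t_n)).
$$
Evaluating along the subsequence where $x(t_n)$ and the relevant $x_{\tau_j}(t_n)$ approach $\varepsilon$ from below, the right-hand side is strictly positive by (\ref{H1_uni}), contradicting $x'(t_n)\le 0$.

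The main obstacle I anticipate is the bookkeeping needed to invoke (\ref{H1_uni}) correctly. That condition is stated as a liminf as $x,y\to r^-$, i.e.\ it controls the expression when \emph{both} arguments approach $r$ from below simultaneously. Along the chosen sequence, $x(t_n)$ approaches its limiting value but $x_{\tau_j}(t_n)$ may be noticeably larger, so I must be careful that the delayed values do not escape the regime where (\ref{H1_uni}) applies. The clean way around this is to use the monotonicity bound above to \emph{replace} $f(x_{\tau_j}(t_n))$ by $f(x(t_n))$, thereby reducing everything to a single argument $x(t_n)$ and a single $G(t_n,x(t_n))$; then (\ref{H1_uni}) with $y=x=x(t_n)$ is exactly what is needed. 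I would also take care to choose $\varepsilon$ small enough that $\varepsilon<e^{-k_0\tau^*}$, ensuring the monotonicity step is legitimate throughout. Once the contradiction is reached for this fixed $\varepsilon$, uniform persistence follows with that same $\varepsilon$, since the argument used no data of the individual solution beyond positivity and non-triviality.
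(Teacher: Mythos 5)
Your overall skeleton (strong persistence from Proposition~\ref{prop1}, the threshold $r_0$ coming from (\ref{H1_uni}), then a contradiction along a minimizing sequence) matches the paper's strategy, but collapsing everything into a single minimizing-sequence argument leaves a genuine gap. The assertion that one can always select $t_n\nearrow+\infty$ with $x(t_n)=\min_{t\in[t_{n-1},t_n]}x(t)$ \emph{and} $x'(t_n)\le 0$, $x(t_n)\le x_{\tau_j}(t_n)$ fails precisely when the solution approaches its inferior limit $r^{\ast}:=\liminf_{t\to+\infty}x(t)$ from below, e.g. when $x(t)$ is eventually increasing with $x(t)\nearrow r^{\ast}$. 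In that regime the minimum over $[t_{n-1},t_n]$ is attained at the left endpoint, where $x'\ge 0$, and moreover $x_{\tau_j}(t)\le x(t)$, so your monotonicity step runs the wrong way: $f(x_{\tau_j}(t))\le f(x(t))$. No contradiction of the form $0\ge x'(t_n)>0$ can be launched. This is exactly why the paper splits the proof into three cases: $x(t)\ge r$ for large $t$, and oscillation around $r$, are both handled by your minimizer computation; but the case $x(t)\to r^-$ is handled by a different mechanism. There, since $x(t)$ and $x_{\tau_j}(t)$ \emph{both} tend to $r^-$, condition (\ref{H1_uni}) applied with the two distinct arguments $x=x(t)$, $y=x_{\tau_j}(t)$ yields $x'(t)>c_0>0$ for all large $t$, incompatible with the convergence of $x(t)$. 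Your reduction to the single argument $y=x=x(t_n)$ discards the very feature of (\ref{H1_uni}) (separate variables $x$ and $y$) that makes this case work, and this missing case must be added for the proof to be complete.

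Two smaller points. First, along your sequence $x(t_n)$ converges to $r^{\ast}<\varepsilon$, not to $\varepsilon$; the sentence ``evaluating along the subsequence where $x(t_n)$ \dots approach $\varepsilon$ from below'' should refer to $r^{\ast}$, with Proposition~\ref{prop1} guaranteeing $r^{\ast}>0$ so that $r^{\ast}\in(0,r_0)$ (you do invoke it, so this is cosmetic). Second, in the cases your argument does cover, $x(t_n)$ may approach $r^{\ast}$ from \emph{above} (the case $x(t)\ge r^{\ast}$ eventually), whereas (\ref{H1_uni}) is stated for $x,y\to r^-$; this is repairable either by reading (\ref{H1_uni}) as uniform positivity on a one-sided neighborhood $(r-\eta,r)$ and applying it at some $r\in(r^{\ast},r_0)$ slightly above $r^{\ast}$, or, as the paper does in these cases, by bypassing (\ref{H1_uni}) altogether and using the pointwise estimate from (\ref{H0}): $0\ge x'(t_n)\ge x(t_n)\bigl[\sum_{j=1}^{k}p_j(t_n)e^{-x(t_n)}-G(t_n,x(t_n))/x(t_n)\bigr]>0$ whenever $x(t_n)<\min\{e^{-k_0\tau^*},-\ln(1-\delta)\}$, which requires no one-sided limit at all.
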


\begin{proof}
Suppose that $x(t)$ is a solution such that $\liminf_{t\to\infty}x(t)=r\in (0,r_0)$.  
There are 3 cases we need to analyze:
\begin{enumerate}
\item \label{caso1}
The solution $x(t)\ge r$ for large values of $t$. 
\item\label{caso2}  
The solution verifies $\lim_{t\to+\infty} x(t)=r^-$.
\item \label{caso3}
The solution $x(t)$ oscillates around $r$. 
\end{enumerate}
    
For the first case, let us set $t_n\to+\infty$ such that $x(t_n)=\min_{t\in[t_{n-1},t_n]}x(t)$. 
Hence $x'(t_n)\le 0$ and $x(t_n) \le x_{\tau_j}(t_n)$.
Since $\lim_{t\to+\infty}x(t_n)=r$, there is $n\gg0$ such that $x(t_n)<r_0$ and,
as in the previous proof, the latter implies that $x_{\tau_j}(t_n)\leq 1$.
Fix $r_0< e^{-k_0\tau^*}$ in order to apply $(\ref{H0})$; then   $x(t_n)<r_0$ for $n$ large. As before, there exists $\delta<1$ such that
$$
\frac{G(t_n, x(t_n))}{x(t_n)}<(1-\delta)\sum_{j=1}^{k} p_j(t_n);
$$
therefore, 
$$0\ge \sum_{j=1}^{k} p_j(t)e^{-x(t_n)}  - \frac{G(t,x(t_n))}{x(t_n)}>0,$$
a contradiction.
\medskip

Let us analyze the other two cases:
If  $x(t)\to r^-$, then also $x_{\tau_j}(t)\to r^-$.  
Because of  (\ref{H1_uni}) we can choose a constant $c_0>0$  such that
$$ 
\liminf_{t\to+\infty, x,y\to r^-}
\sum_{j=1}^{k} p_j(t)f(y) -G(t,x)>c_0.
$$ 
It follows that $x'(t)>{c_0}$ for $t\gg 0$, which cannot happen. 
Finally, in the third case, we can fix a sequence $s_n\to +\infty$  such that $x(s_n)>r$ and $x(s_n)\to r^+$.  
Let us consider $t_n \in [s_{n-1},s_n]$ such that $x(t_n)= \min_{t\in [s_{n-1},s_n]} x(t)$.
Thus $x(t_n)\to r$, with $x'(t_n)\le 0$ and $x_{\tau_j}(t_n)\ge x(t_n)$ for $n$ sufficiently large. 
Hence, the situation is similar to the first case, which yields a contradiction.
\end{proof}


\subsection{Persistence in the $N$-dimensional case}

As mentioned in the introduction, our study is devoted to a general $N$-system of Nicholson's equation. This section shall be based on the previous scalar analysis to deduce   persistence results when $N>1$. It is observed  that the function $\min\{x(t)\}$  will be crucial to prove the results.

For the reader's convenience, let us write again  the system introduced in the introduction, namely
\begin{equation}\label{nich-sys-gral-2}
x_i'(t)=-d_i(t, x_i) + \sum_{l=1, l\neq i}^{N} b_{i,l}(t, x_l)+ \sum_{j=1}^{k} p_{i,j}(t) f(x_{i,\tau_j}(t)) -H_i(t,x)
\end{equation}
\noindent for $i=1,...,N$.

The functions $b_{i,l}(t, x_l), d_i(t, x_i), H_i(t,x), p_{i,j}(t)$ are nonnegative and possibly nonlinear. 
Let us set $G_i(t,x)=d_i(t,x)+ H_i(t,x)$ and  assume as before that $\frac{G_i(t,x)}{x_i}$ are bounded from above. 
\medskip 

We shall show strong and uniform persistence under appropriate conditions, which are analogous to those obtained in the scalar case. 
Notice that each equation has an extra sum of terms $b_{i,l}$, which it is interpreted as the mutual cooperation interactions between the populations. 
\medskip

\begin{rem}
As in the scalar case, we assume the existence of  $k_0$ such that $G_i(t,x(t))\le k_0 x_i(t)$ for $1\leq i\leq N$.
It follows that $x_i(t) \ge e^{-k_0\tau^*} x_{i,\tau_j}(t)$ for $1\leq j\leq k$.
\end{rem}


For $i, l=1, ...N$, set
\begin{equation*}
\begin{split}
G^{0}_{i}(t) &:=\limsup_{x_i\to0^{+}}  \frac{G_i(t, x)}{x_i}.\\
b_{0_{i,l}} (t) &:=\liminf_{x_l\to0^{+}}  \frac{b_{i,l}(t, x_l
)}{x_l}, \hbox{ for }l\ne i.
\end{split}
\end{equation*}

\noindent It is assumed that all of the limits are uniform on $t$ and $G^{0}_i$ it is also uniform on $x_l$ with $l\neq i$.
\medskip

\begin{prop}\label{prop2}
The solutions of the system (\ref{nich-sys-gral-2}) are strongly persistent if, for some $\delta>0$,  
\begin{equation}\label{H0_syst}
 G^0_{i}(t)<(1-\delta)\left(\sum_{l=1, l\neq i}^{N} b_{0_{i,l}} (t)+ \sum_{j=1}^{k} p_{i,j}(t)\right) 
\end{equation}
\end{prop}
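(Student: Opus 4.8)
The plan is to prove the stronger assertion $\liminf_{t\to+\infty} v(t)>0$, where $v(t):=\min_{1\le i\le N} x_i(t)$; since $x_i\ge 0$ gives $|x(t)|\ge v(t)$, this immediately implies $\liminf_{t\to+\infty}|x(t)|>0$, i.e. strong persistence. This is the reduction to one dimension announced in the introduction: at the times where the minimum is attained, the corresponding coordinate satisfies an equation of exactly the shape of the scalar equation (\ref{escalar}), the only novelty being the extra nonnegative mutualism sum. The aim is therefore to rerun the argument of Proposition \ref{prop1} on the minimizing coordinate and show that this extra sum is controlled by the quantities $b_{0_{i,l}}$ appearing in (\ref{H0_syst}).

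First I would argue by contradiction, assuming $\liminf_{t\to+\infty} v(t)=0$. Following the scalar proof, fix $t_n\nearrow+\infty$ with $v(t_n)=\min_{t\in[t_{n-1},t_n]}v(t)$, so that $v(t_n)\to 0$, and for each $n$ pick an index realizing the minimum, $x_i(t_n)=v(t_n)$; since there are finitely many coordinates, after passing to a subsequence this index $i$ is constant. From the choice of $t_n$ one gets, as in the scalar case, $x_i(t_n)\le x_i(t)$ for $t\in[t_{n-1},t_n]$, hence $x_i'(t_n)\le 0$ and $x_i(t_n)\le x_{i,\tau_j}(t_n)$ for $n$ large (the intervals eventually exceed $\tau^*$ in length), while $x_i(t_n)\le x_l(t_n)$ for every $l$ by definition of $v$.

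Next I would bound the right-hand side of the $i$-th equation from below, term by term. Since $x_i(t_n)\le e^{-k_0\tau^*}$ for $n$ large, one has $x_{i,\tau_j}(t_n)\le 1$, and because $f$ is increasing on $[0,1]$ together with $x_i(t_n)\le x_{i,\tau_j}(t_n)$ we obtain $f(x_{i,\tau_j}(t_n))\ge f(x_i(t_n))=x_i(t_n)e^{-x_i(t_n)}$. The uniform limsup defining $G^0_i$ (uniform in $t$ and in the coordinates $x_l$, $l\neq i$) gives $G_i(t_n,x(t_n))\le (G^0_i(t_n)+\varepsilon)\,x_i(t_n)$, and the mutualism terms are bounded below using $b_{0_{i,l}}$ as explained in the last paragraph. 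Factoring out $x_i(t_n)>0$ and using $e^{-x_i(t_n)}>1-\delta$ for $n$ large, we reach
$$0\ge x_i'(t_n)\ge x_i(t_n)\left[\sum_{l\neq i}\frac{b_{i,l}(t_n,x_l(t_n))}{x_i(t_n)}+\sum_{j=1}^k p_{i,j}(t_n)e^{-x_i(t_n)}-\frac{G_i(t_n,x(t_n))}{x_i(t_n)}\right].$$
Exactly as in Proposition \ref{prop1}, the strict inequality (\ref{H0_syst}) is tailored so that the first two sums dominate $(1-\delta)\left(\sum_{l\neq i}b_{0_{i,l}}(t_n)+\sum_j p_{i,j}(t_n)\right)>G^0_i(t_n)\ge \frac{G_i(t_n,x(t_n))}{x_i(t_n)}$, forcing the bracket to be strictly positive and producing the contradiction $0>0$.

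The main obstacle is the mutualism estimate, namely justifying $\frac{b_{i,l}(t_n,x_l(t_n))}{x_i(t_n)}\ge (1-\delta)\,b_{0_{i,l}}(t_n)$, because the construction places only $x_i(t_n)=v(t_n)$ near zero while the companion coordinates $x_l(t_n)$ need not tend to $0$. The clean situation is $x_l(t_n)\to 0$: the uniform liminf defining $b_{0_{i,l}}$ then yields $b_{i,l}(t_n,x_l(t_n))\ge (1-\delta)\,b_{0_{i,l}}(t_n)\,x_l(t_n)$, and since $x_l(t_n)\ge x_i(t_n)$ and $b_{i,l}\ge 0$ the desired quotient bound follows. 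When some $x_l(t_n)$ stays bounded away from $0$ the estimate is in fact more favorable, since the nonnegative quotient $b_{i,l}(t_n,x_l(t_n))/x_i(t_n)$ diverges as $x_i(t_n)\to 0$ whenever $b_{i,l}(t_n,x_l(t_n))$ is bounded below by a positive constant; the only point requiring care is to absorb the degenerate possibility that a mutualism term vanishes while its linearization $b_{0_{i,l}}$ does not. I would therefore organize the proof so that the inequality above holds uniformly in $n$, which is precisely what reduces the $N$-dimensional computation to the scalar one of Proposition \ref{prop1}.
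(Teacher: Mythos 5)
Your proposal follows the paper's own proof essentially step for step: the same reduction via the nonsmooth guiding function $v(t)=\min_{1\le i\le N}x_i(t)$, the same minimizing times $t_n$ with a fixed minimizing coordinate extracted by pigeonhole along a subsequence, the same use of the remark $x_i(t)\ge e^{-k_0\tau^*}x_{i,\tau_j}(t)$ to ensure $x_{i,\tau_j}(t_n)\le 1$ and hence $f(x_{i,\tau_j}(t_n))\ge f(x_i(t_n))$, and the same final contradiction from $x_i'(t_n)\le 0$ combined with $e^{-x_i(t_n)}>1-\delta$ and (\ref{H0_syst}). The only cosmetic difference is that the paper starts from $\liminf_{t\to+\infty}|x(t)|=0$ and first selects times $s_n$ at which the whole vector is small, whereas you contradict $\liminf_{t\to+\infty}v(t)=0$ directly; since $v\le |x|$, this is a logically harmless (indeed slightly stronger) formulation.

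The one place you part company with the paper is the mutualism estimate, and there your diagnosis is sharper than the published argument. The paper bounds $\frac{b_{1,l}(t_n,x_l)}{x_1}\ge\frac{b_{1,l}(t_n,x_l)}{x_l}$ using $x_1=v(t_n)\le x_l$, and then asserts, ``from (\ref{H0_syst})'', that $\frac{G_1(t_n,x)}{x_1}<\sum_{l=2}^N\frac{b_{1,l}(t_n,x_l)}{x_l}+(1-\delta)\sum_{j=1}^k p_{1,j}(t_n)$. This step tacitly requires $\frac{b_{1,l}(t_n,x_l(t_n))}{x_l(t_n)}\ge(1-\delta)\,b_{0_{1,l}}(t_n)$ (up to an absorbable error), which the definition of $b_{0_{1,l}}$ as a $\liminf$ at $0^+$ supplies only when $x_l(t_n)\to 0$ --- exactly the obstacle you name, and indeed the construction does not force the companion coordinates to be small at the times $t_n$. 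Your case split, however, does not close it either: when $x_l(t_n)$ stays away from $0$, your divergence argument needs $b_{i,l}(t_n,x_l(t_n))$ bounded below by a positive constant, and mere nonnegativity permits, e.g., $b_{i,l}(t,x)=b_{0_{i,l}}(t)\,xe^{-x}$ with $x_l(t_n)\to+\infty$, where the quotient bound fails while $b_{0_{i,l}}$ does not vanish; this is precisely the ``degenerate possibility'' you concede. So, strictly speaking, both your write-up and the paper's proof are complete only when the companion coordinates tend to $0$ along $t_n$, or under an extra hypothesis such as $b_{i,l}(t,x)/x\ge b_{0_{i,l}}(t)$ for all $x>0$ (automatic in the linear case the condition is modeled on, or when $b_{i,l}(t,\cdot)$ is concave). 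In short: your proposal is the paper's argument with its hidden step made explicit; the residual gap you honestly flag is real, but it is the paper's gap as well, not one you introduced.
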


\begin{proof} 
Suppose $x(t)$ is a solution such that $\liminf_{t\to+\infty}|x(t)|=0$
and  choose a sequence $s_n\nearrow+\infty$ such that $|x(s_n)| \le |x(t)|$ for $t\le s_n$ and
$\lim_{n\to+\infty}|x(s_n)|=0$. 
Thus, $\lim_{n\to+\infty}x_{i}(s_n)=0$ for $i=1,...,N$.
Let us set $v(t):=\min\{x_i(t)\}$ and take $t_n$ such that $v(t_n)=\min_{t\in[s_0,s_n]} v(t)$.
Passing to a  sub-sequence, we can find $i_0\in\{1,...,N\}$ such that $x_{i_0}(t_n)=v(t_n)$ for all $n$. 
Without loss of generality, we can assume that 
$x_1(t_n)=v(t_n)$.
This implies that $x_{1}'(t_n)\leq0$ and  $x_1(t_n)\le x_{1,\tau_j}(t_n)$ for $n$ large.
For simplicity, let us use the notation $x_i(t_n)=x_i$.
Following the reasoning from the scalar case, we know that for $n$ large

$$
x_1'\geq 
x_1\left[
\sum_{l=2}^N \frac{b_{1,l}(t_n,x_l)}{x_1} 
+\sum_{j=1}^{k}p_{1,j}(t_n)e^{-x_1} -\frac{G_1(t_n,x)}{x_1}\right],    
$$

\noindent and since $x_1\leq x_l$ for all $l\in \{2,...,N\}$, 

$$
x_1'\geq x_1\left[\sum_{l=2}^N \frac{b_{1, l}(t_n, x_l)}{x_{l}}+ \sum_{j=1}^{k} p_{1,j}(t_n)e^{-x_1} -\frac{G_1(t_n,x)}{x_1}\right]. 
$$
From  (\ref{H0_syst}) it holds, for $n$ large: 
$$
\frac{G_1(t_n, x)}{x_1}<\sum_{l=2}^{N}\frac{b_{1,l}(t_n, x_l
)}{x_l}+(1-\delta) \sum_{j=1}^{k} p_{1,j}(t_n).  
$$
Moreover, there exists $n_0$ such that $e^{-x_1(t_n)}>1-\delta$, for $n\geq n_0$, so

$$
0\geq\sum_{l=2}^N \frac{b_{1, l}(t_n, x_l)}{x_{l}}+ \sum_{j=1}^{k} p_{1,j}(t_n)e^{-x_1} -\frac{G_1(t_n,x)}{x_{1}}>0
$$
\noindent we get a contradiction.
\end{proof}

\medskip


The proof of uniform persistence in the $N$-dimensional case is a bit more technical. With the scalar case in mind, we consider the assumption:

\medskip 
There exists $c>0$ such that 
\begin{equation}\label{p-bounded-sys}
    \sum_{j=1}^{k} p_{i,j}(t)\ge c
\end{equation}
for all $t$. 
As before, (\ref{H0_syst}) and (\ref{p-bounded-sys}) imply the existence of $r_0>0$ such that,
for each $r\in (0,r_0)$,
\begin{equation}\label{H1_syst_uni}  
\liminf_{t\to+\infty\,, x_i,\, y_i\to r^-} \sum_{j=1}^{k} p_j(t)f(y_i) + \sum_{l=1, l\neq i}^{N} b_{i,l}(t, x_l) - G_i(t,x)>0.\,
\end{equation}   

\medskip

Finally, we state the main result of this section:

\begin{thm}\label{prop4}
Assume (\ref{H0_syst}) - (\ref{p-bounded-sys}) hold,
then the solutions of (\ref{nich-sys-gral-2}) are uniformly persistent.
\end{thm}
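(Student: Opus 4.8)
The plan is to reduce the $N$-dimensional problem to the scalar one through the guiding function $v(t):=\min_{1\le i\le N} x_i(t)$, as announced in the introduction. Since $|x(t)|\ge v(t)$ for every positive solution, it suffices to exhibit a single constant $\varepsilon>0$, independent of the initial datum, with $\liminf_{t\to+\infty} v(t)>\varepsilon$ for all nontrivial solutions. I would fix the constant $r_0>0$ furnished by (\ref{H1_syst_uni}), shrinking it if necessary so that $r_0<e^{-k_0\tau^*}$ and $e^{-r_0}>1-\delta$, and then show that $\liminf_{t\to+\infty} v(t)\ge r_0$ for every solution; this yields uniform persistence with, for instance, $\varepsilon=r_0/2$.

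The argument proceeds by contradiction: assume some nontrivial solution satisfies $\liminf_{t\to+\infty} v(t)=r$ with $r<r_0$. The degenerate case $r=0$ is discarded by rerunning the running-minimum computation of Proposition \ref{prop2} verbatim, since that computation already controls $v$ rather than $|x|$; hence we may take $r\in(0,r_0)$. I would then split into the same three cases as in the proof of Theorem \ref{teo1}.

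In Case 1 ($v(t)\ge r$ for large $t$) and Case 3 ($v$ oscillating around $r$) I choose running minima $t_n$ of $v$ and, passing to a subsequence, a fixed coordinate --- WLOG $x_1$ --- realizing the minimum, so that $x_1(t_n)=v(t_n)$, $x_1'(t_n)\le 0$, $x_1(t_n)\le x_{1,\tau_j}(t_n)$, and $x_1(t_n)\le x_l(t_n)$ for all $l$. With these inequalities the chain of estimates is literally the one in Proposition \ref{prop2}: since $x_1(t_n)\to r<r_0$ we have $e^{-x_1(t_n)}>1-\delta$ and, combining the monotonicity of $f$ on $[0,1]$ with (\ref{H0_syst}), the bracket multiplying $x_1(t_n)$ is strictly positive, contradicting $x_1'(t_n)\le 0$. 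In Case 2 ($v(t)\to r^-$) I instead track, for each large $t$, a coordinate $i(t)$ with $x_{i(t)}(t)=v(t)<r$; invoking (\ref{H1_syst_uni}) together with the one-sided derivative rule $D^+v(t)=\min_{i\in I(t)} x_i'(t)$ for a minimum of smooth functions (where $I(t)$ is the active index set), I obtain $D^+v(t)\ge c_0>0$ for all large $t$, so $v$ is eventually increasing --- contradicting $v(t)\to r^-$.

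The main obstacle is the nonsmoothness of $v$ and the fact that the index realizing the minimum may change with $t$. This forces two technical points: justifying the one-sided derivative identity for $v$, so that a pointwise positive lower bound on the derivatives of the active coordinates translates into monotonicity of $v$; and controlling the quantities entering the bracket --- the delayed values $x_{i,\tau_j}(t)$ and, through $G_i$ and $H_i$, the remaining coordinates $x_l$ --- when only the minimum coordinate is known to approach $r$. Here the hypotheses do the work: the mutualism terms $b_{i,l}$ are nonnegative and only help, the uniformity in $t$ and in $x_l$ of the limits defining $G^0_i$ and $b_{0_{i,l}}$ keeps (\ref{H1_syst_uni}) applicable along the relevant sequences, and the running-minimum property $v(t_n)\le v(t_n-\tau_j)$ supplies the delay inequalities needed to invoke the monotonicity of $f$. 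Once these are in place, each case closes exactly as in the scalar Theorem \ref{teo1}.
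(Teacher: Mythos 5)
Your proposal is correct and follows essentially the same route as the paper: the guiding function $v(t)=\min_i x_i(t)$, the case $\tilde r=0$ handled by rerunning Proposition \ref{prop2}, and the three scalar-type cases closed via running minima, a subsequence fixing the active coordinate, and conditions (\ref{H0_syst}) and (\ref{H1_syst_uni}). Your explicit use of the Dini derivative $D^+v(t)=\min_{i\in I(t)}x_i'(t)$ in the case $v(t)\to r^-$ is in fact slightly more careful than the paper's informal ``$v'(t_n)>c_0$'' (note only that the contradiction comes from the resulting linear growth of $v$, not merely from $v$ being increasing, since an increasing function can still converge from below).
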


\begin{proof}
Let us begin by setting $\varepsilon_0$ small enough to use condition (\ref{H0_syst}) and
let us suppose that $x(t)$ is a solution such that $\liminf_{t\to\infty}|x(t)|=\varepsilon_0$.
It is clear to see that $\liminf_{t\to+\infty}x_i(t)=r_i\leq \varepsilon_0$ for all $i$.
As in the scalar case, three cases  have to be considered.
Let us set $\tilde{r}:=\liminf_{t\to+\infty}v(t)$, where $v(t)=\min\{x_i (t)\}$.  
First, let us assume that $\tilde{r}=0$.
In this case, we can set $t_n\to+\infty$ such that $\lim_{n\to+\infty}v(t_n)=0$ and $v(t_n)=\min_{t\in[t_0,t_n]} v(t)$.
Clearly, we can find $i_0$ such that $x_{i_0}(t_n)=v(t_n)$ for all $n$. 
Thus, following an analogous reasoning of the proof of Proposition (\ref{prop2}) we get a contradiction.


Let us 
examine the case when $v(t)\geq\tilde{r}$ for $t\gg0$. 
We may choose $t_n\to+\infty$ such that 
$v(t_n)=\min_{t\in[t_0,t_n]} v(t)$ and $i_0$ such that $v(t_n)=x_{i_0}(t_n)$ for all $n$.
Then, $x_{i_0}'(t_n)\leq 0$ and $x_{i_0,\tau_j}(t_n)\geq x_{i_0}(t_n)$ for $n\gg0$.
Applying the same procedure as before, we get
$$
x_{i_0}'\geq 
x_{i_0}\left[\sum_{l=1 l\neq i_0}^N \frac{b_{i_0,l}(t_n,x_l)}{x_{i_0}} 
+\sum_{j=1}^{k}p_{i_0,j}(t_n)e^{-x_{i_0}} -\frac{G_{i_0}(t_n,x)}{x_{i_0}}\right],    
$$
Since $x_{i_0}\leq x_l$, 
$$
x_{i_0}'\geq 
x_{i_0}\left[
\sum_{l=1 l\neq i_0}^N \frac{b_{i_0,l}(t_n,x_l)}{x_{l}} 
+\sum_{j=1}^{k}p_{i_0,j}(t_n)e^{-x_{i_0}} -\frac{G_{i_0}(t_n,x)}{x_{i_0}}\right].    
$$
Since $\varepsilon_0$ has been chosen such that each $x_i(t)$ is sufficiently close to zero for $n\gg 0$,   a contradiction follows.
If  $v(t)\to \tilde{r}^-$, then also $v_{\tau_j}(t)\to \tilde{r}^-$.  
Without loss of generality, we can assume that $\tilde{r}<r_0$.
As before, via sub-sequences we can choose $i_0$ such that $v(t_n)=x_{i_0}(t_n)$, so in the presence of (\ref{H1_syst_uni}) we can pick a constant $c_0>0$ such that for large values of $n$
$$  
\sum_{j=1}^{k} p_j(t_n)f(x_{i_0}) + \sum_{l=1, l\neq i_0}^{N} b_{i_0,l}(t_n, x_l) - G_{i_0}(t_n,x)>c_0.
$$   

It follows that $v'(t_n)>c_0$ for $n\gg 0$, which cannot happen. 
In the remaining case, we can fix a sequence $s_n\to +\infty$  such that $v(s_n)>\tilde{r}$ and $v(s_n)\to \tilde{r}^+$.  
Let us consider $t_n \in [s_{n-1},s_n]$ such that $v(t_n)= \min_{t\in [s_{n-1},s_n]} v(t)$.
Thus $v(t_n)\to \tilde{r}$, with $v'(t_n)\le 0$ and $v_{\tau_j}(t_n)\ge v(t_n)$ for $n$ sufficiently large. 
Hence, the situation is again as in the  first case and the contradiction follows.
\end{proof}




\section{Periodic solutions in the $N$-dimensional case}
\label{periodic}

In this section, we investigate the existence of $T$-periodic solutions in the context of our generalization of the $N$-dimensional Nicholson equation.
For this purpose, we shall use a topological and degree theory methods. 
In more precise terms, we will define a suitable operator on a proper set of the space of continuous $T$-periodic functions and show that the fixed points of the operator are the periodic solutions of our system.
The techniques of the Brouwer degree and the Leray-Schauder degree are used to prove the existence of fixed points and, consequently, of periodic solutions.
\medskip


Let us introduced the standard continuation method \cite{maw}, following the notation of \cite{AD2}.  
{Recall that $X\subset C_T$ is defined as the interior of the positive cone of $C_T$ and define the operators}
$$\Phi:=(\phi_1,...,\phi_N):X\to C_T,\qquad  \mathcal{K}:X\cap C^1(\R,\R^{N})\to C_T,$$
\noindent in the following way:
\begin{equation*}
\begin{split}
\phi_i(x)(t)&:=-d_i(t, x_i) + \sum_{l=1, l\neq i}^{N} b_{i,l}(t, x_l)+ \sum_{j=1}^{k} p_{i,j}(t) f(x_{i, \tau_j}(t)) -H_i(t,x),\\
\mathcal{K}\varphi:=x&\,\hbox{ is the unique solution of the equation $x'(t)=\Phi(t)$ such that $\overline{x}=0$}.
\end{split}    
\end{equation*}

Let us identify the positive constant functions of $X$ with ${(0,+\infty)}^N$ and define $g:{(0,+\infty)}^N\to\R^N$ as

$$g(x)=-\overline{\Phi(x)}=-\ddfrac{1}{T}\int_{0}^{T}\Phi(x)(t)\, dt.$$

In order to state the lemma, let us define
\begin{equation*}
\begin{split}
G_{i, \infty}(t)&:=\liminf_{x_i\to+\infty}  \frac{G_i(t, x)}{x_i},\\
b^{\infty}_{i,l}(t)&:=\limsup_{x_l\to+\infty}  \frac{b_i(t, x_l
)}{x_l},\hbox{ $l\neq i$}.\\
\end{split}
\end{equation*}

\noindent Again, we shall assume that the limits are uniform in $t$ and $G_{i, \infty}$ it is uniform in $x_l$ with $l\neq i$.  
We consider the following hypothesis: 

\begin{equation}\label{per}
    G_{i, \infty}(t)>\sum_{l=1, l\neq i}^{N} b^{\infty}_{i,l}(t)+\beta,
\end{equation}   
 for some positive $\beta$.

\begin{lem}\label{lema1}
Assume (\ref{H0_syst}) and (\ref{per}) hold 
then the solutions of the system $x'=\lambda\Phi(x)$ are uniformly bounded for every $\lambda\in[0,1]$.
\end{lem}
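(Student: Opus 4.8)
The plan is to obtain an a priori upper bound on the $T$-periodic solutions of the homotopy $x'=\lambda\Phi(x)$ that does \emph{not} depend on $\lambda$, by testing the equation at the global maximum of the nonsmooth guiding function $u(t):=\max_i x_i(t)$ and invoking the asymptotic-linearity hypothesis (\ref{per}). It suffices to treat $\lambda\in(0,1]$, since for $\lambda=0$ the system degenerates to $x'=0$, whose periodic solutions are constants and are controlled directly in the degree computation. Let $x$ be such a solution and put $M:=\max_{i,t}x_i(t)$, attained at some index $i_0$ and time $t^*$, so that $M=x_{i_0}(t^*)$. Since $x_{i_0}(t)\le M=x_{i_0}(t^*)$ for every $t$, the point $t^*$ is a global maximum of the \emph{differentiable} coordinate $x_{i_0}$, whence $x_{i_0}'(t^*)=0$; this is precisely where the max-function trick pays off, as no Dini derivative is needed. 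Dividing the $i_0$-th equation by $\lambda>0$ and evaluating at $t^*$ reduces everything to the scalar identity
$$
G_{i_0}(t^*,x(t^*))=\sum_{l\neq i_0} b_{i_0,l}(t^*,x_l(t^*))+\sum_{j} p_{i_0,j}(t^*)\, f(x_{i_0,\tau_j}(t^*)).
$$

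Next I would argue by contradiction, assuming that $M$ can be made arbitrarily large along a family of solutions, and estimate both sides using that the nonlinearities are asymptotically linear. Since $f(y)=ye^{-y}\le e^{-1}$, the production term is bounded by $e^{-1}\sum_j\|p_{i_0,j}\|_\infty$ independently of $M$, so the delay causes no difficulty here. For the mutualism terms, the definition of $b^\infty_{i_0,l}$ (uniform in $t$), combined with continuity on the compact range for small arguments, yields for each $\epsilon>0$ a constant $K_{i_0,l}$ with $b_{i_0,l}(t,s)\le(b^\infty_{i_0,l}(t)+\epsilon)s+K_{i_0,l}$ for all $s\ge0$; since $x_l(t^*)\le M$, each such term is at most $(b^\infty_{i_0,l}(t^*)+\epsilon)M+K_{i_0,l}$. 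For the left-hand side, the definition of $G_{i_0,\infty}$ — which is assumed uniform both in $t$ \emph{and} in the off-diagonal coordinates $x_l$ — gives $G_{i_0}(t^*,x(t^*))\ge(G_{i_0,\infty}(t^*)-\epsilon)M$ once $M$ is large enough.

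Substituting these bounds into the identity and collecting the coefficient of $M$ produces
$$
\left(G_{i_0,\infty}(t^*)-\sum_{l\neq i_0} b^\infty_{i_0,l}(t^*)-N\epsilon\right)M\le \sum_{l\neq i_0} K_{i_0,l}+e^{-1}\sum_{j}\|p_{i_0,j}\|_\infty.
$$
By (\ref{per}) the first two terms in the bracket exceed $\beta$, so choosing $\epsilon$ with $N\epsilon<\beta/2$ makes the bracket larger than $\beta/2>0$; hence $M$ is dominated by a constant depending only on $\beta$, the $\|p_{i_0,j}\|_\infty$ and the $K_{i_0,l}$, none of which involve $\lambda$ or the particular solution. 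This contradicts the unboundedness of $M$ and yields the uniform upper bound. The complementary role of (\ref{H0_syst}) is to keep the family away from the boundary of $X$: applying the identical reasoning to $v(t):=\min_i x_i(t)$ at its global minimum, exactly as in the proof of Proposition~\ref{prop2}, bounds the solutions below away from $0$ uniformly in $\lambda$, so the whole family lies in a fixed compact subset of $X$, which is what the continuation scheme requires.

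The step I expect to be the main obstacle is the second paragraph, namely turning the one-sided $\limsup/\liminf$ definitions into honest affine two-sided estimates valid for \emph{every} value of the argument (using compactness on the bounded range) and, above all, securing the uniformity of $G_{i_0,\infty}$ in the other coordinates: at $t^*$ those coordinates $x_l(t^*)$ may take any value in $[0,M]$, and without the assumed uniformity in $x_l$ the single scalar inequality at $t^*$ would not close. Careful bookkeeping of $\epsilon$ against $\beta$ is the only remaining routine point.
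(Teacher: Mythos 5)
Your proposal is correct in substance and, for the upper bound, follows essentially the paper's own argument: evaluate the $i_0$-th equation at a global maximum $t^*$ of $\max_i x_i$ (where $x_{i_0}'(t^*)=0$ and $\lambda>0$ drops out), bound the birth terms by $e^{-1}\sum_j\|p_{i_0,j}\|_\infty$, and use (\ref{per}) to make the coefficient of $M$ exceed $\beta/2$. Your two-sided affine estimate $b_{i_0,l}(t,s)\le \left(b^\infty_{i_0,l}(t)+\epsilon\right)s+K_{i_0,l}$, valid for \emph{all} $s\ge 0$ by compactness on the bounded range, is in fact tidier than the paper's corresponding step, which invokes the limit inequality ``for $x_i\ge\tilde R$'' even though at $t^*$ only the coordinate $x_{i_0}$ is known to be large; your version closes exactly that loose end. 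You are also more careful than the paper in setting aside $\lambda=0$ explicitly, and you correctly identify that (\ref{H0_syst}) is needed only for the bound away from zero, which the conclusion of the lemma (as used to build the set $\Omega$ in Theorem~\ref{teo-per}) genuinely requires.

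Where you genuinely diverge is the lower bound, and there your one-line deferral to Proposition~\ref{prop2} hides the only delicate point: at the global minimum $t_*$ of $v=\min_i x_i$ one knows $x_{i_0,\tau_j}(t_*)\ge\varepsilon$, but the delayed value may be as large as $R_0>1$, where $f$ is \emph{decreasing}, so $f(x_{i_0,\tau_j}(t_*))\ge f(\varepsilon)$ is not automatic. The paper resolves this by exploiting the upper bound just proved: it chooses $R_1\in(\varepsilon,1]$ with $f(R_1)=f(R_0)$ and uses unimodality of $f$ to get $f(x_{1,\tau_j}(t_*))\ge\min\{f(\varepsilon),f(R_0)\}\ge f(\varepsilon)$, then concludes $\varepsilon\ge\min\{\tilde\varepsilon,-\ln(1-\gamma/p^*)\}$ from (\ref{H0_syst}). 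Proposition~\ref{prop2} instead controls the delayed term through the Remark's estimate $x_{i,\tau_j}(t)\le e^{k_0\tau^*}x_i(t)$, which forces $x_{i_0,\tau_j}(t_*)\le 1$ once $\varepsilon\le e^{-k_0\tau^*}$; this estimate does survive the homotopy, since $x_i'\ge-\lambda G_i(t,x)\ge -k_0x_i$ for $\lambda\in[0,1]$, so your route closes — but that verification must be stated, since the whole lower bound hinges on it. So: same mechanism as the paper above, a legitimately different (Remark-based rather than $R_1$-unimodality-based) mechanism below, with the paper's variant buying independence from the exponential delay estimate and yours buying a direct reuse of the persistence proof; only the unstated delayed-term check keeps your write-up from being complete as it stands.
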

\medskip

\begin{proof}
Let us a consider $x=(x_1,...,x_n)$ a solution of the system $x'=\lambda\Phi(x)$ and set $R:=\max_{1\le i\le N}\{x_i(t), t\in[0,T]\}$.
As before, we may assume that $R=\vert\vert x_1 \vert\vert_\infty$ and fix $t^*\in[0, T ]$ such that $x_1(t^*)=R$. From the first equation of the system $x'=\lambda\Phi(x)$, it follows that $x_1'(t^*) = 0$, so 

$$
G_1(t^*,x(t^*)) = \sum_{l=2}^{N} b_{1,l}(t^*, x_l(t^*))+ \sum_{j=1}^{k} p_{1,j}(t^*)f(x_{1,\tau_j}(t^*)).
$$
\medskip

\noindent Since $f(x_{1,\tau_j}(t^*))\leq e^{-1}$ 
we deduce
\begin{equation*}
\begin{split}
R\left( \frac{G_1(t^*,x(t^*))}{R}\right)
\leq R\left(\sum_{l=2}^{N}\frac{b_{1,l}(t^*, x_l(t^*))}{x_l(t^*)}\right)+\frac{p^*}{e}\quad\text{where $p^*=\max_{1\le i\le N}\sum_{j=1}^{k} p_{i,j}$.}
\end{split}    
\end{equation*}
Thus,  
$$
R\left(\frac{G_1(t^*,x(t^*))}{R}-\sum_{l=2}^{N}\frac{b_{1,l}(t^*, x_l(t^*))}{x_l(t^*)}\right)
\leq
\frac{p^*}{e}.
$$
From (\ref{per}), there exist a constant $\tilde{R}$ such that for $x_i\geq\tilde{R}$
\begin{equation}
\frac{G_1(t, x)}{x_1}>\sum_{l=2}^{N}\frac{b_{1,l}(t, x_l)}{x_l} +\beta
\end{equation}
In particular, 

$$
\frac{G_1(t^*,x(t^*))}{R}>\sum_{l=2}^{N}\frac{b_{1,l}(t^*, x_l(t^*)
)}{x_l(t^*)} +\beta
.$$ 
Thus, $x_i(t)\leq\max\{\tilde{R}, \frac{p^*}{e\beta}\}:=R_0$
for all $t$ and $i = 1,..., N$.

Similarly, set $\varepsilon:=\min_{1\le i\le N}\{x_i(t), t\in[0,T]\}$. We may assume that the minimum occurs at the first coordinate 
and take $t_*\in[0, T]$ such that $x_1(t_*)=\varepsilon$. 
As before,  
 
$$
G_1(t_*,x(t_*)) = \sum_{l=2}^{N} b_{1,l}(t_*, x_l(t_*))+ \sum_{j=1}^{k} p_{1,j}(t_*)f(x_{1,\tau_j}(t_*)).
$$

Notice that if $R_0<\varepsilon$ we obtain the desired bounded. 
Without loss of generality, let us assume that $R_0\geq 1$ and choose
$R_1\in(\varepsilon,1]$ such that $f(R_1)=f(R_0)$. Thus, since $f$ has a global maximum at 1, either  $f(x_{1,\tau_j}(t_*))\geq f(R_0)=f(R_1)\geq f(\varepsilon)$ or directly $f(x_{1,\tau_j}(t_*))\geq f(\varepsilon)$.
In any case, we get

\begin{equation*}
\begin{split}
\frac{G_1(t_*,x(t_*))}{\varepsilon}- \sum_{l=2}^{N}\frac{b_{1,l}(t_*,x_l(t_*))}{x_l(t_*)}
&\geq\sum_{j=1}^{k} p_{1,j}(t_*)e^{-\varepsilon}\\
&\geq\sum_{j=1}^{k} p_{1,j}(t_*)+p^*(e^{-\varepsilon}-1)
\end{split}    
\end{equation*}
Using (\ref{H0_syst}), let us choose $\tilde{\varepsilon}>x_i$ and $\gamma<p^*$  such that 
$$
 \frac{G_1(t,x)}{x_1}-\sum_{l=2}^{N}\frac{b_{1,l}(t, x_l)}{x_l}<\sum_{j=1}^{k} p_{1,j}(t)-\gamma. 
$$
In particular,

$$
 \frac{G_1(t_*,x(t_*))}{x_1(t_*)}-\sum_{l=2}^{N}\frac{b_{1,l}(t_*, x_l(t_*)
)}{x_l(t_*)}<\sum_{j=1}^{k} p_{1,j}(t_*)-\gamma.  
$$

Finally, 
$$x_i\geq\varepsilon_0:=\min\{\tilde{\varepsilon}, -\ln(1-\frac{\gamma}{p^*})\}.$$
\end{proof}


We are in conditions to state the main result of this section:  

\begin{thm}\label{teo-per}
Assume the hypotheses of the previous lemma hold, then the system (\ref{nich-sys-gral-2}) has at least one non-trivial $T$-periodic solution.
\end{thm}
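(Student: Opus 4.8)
The plan is to recast the search for $T$-periodic solutions of (\ref{nich-sys-gral-2}) as a coincidence equation $Lx=Nx$ and to apply the continuation theorem of Mawhin \cite{maw}, using Lemma \ref{lema1} for the a priori bounds and the Poincar\'e-Miranda theorem for the associated finite-dimensional degree. I would set $Lx:=x'$ on the $T$-periodic $C^1$ functions; this $L$ is Fredholm of index zero, with $\ker L\cong\R^N$ (the constants) and $\mathrm{Im}\,L=\{y\in C_T:\overline{y}=0\}$, the operator $\mathcal K$ serving as right inverse on $\mathrm{Im}\,L$ and the mean providing the two projectors. Letting $N$ be the superposition operator induced by $\Phi$, the $T$-periodic solutions of (\ref{nich-sys-gral-2}) are exactly the solutions of $Lx=Nx$. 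Because $\Phi$ is assembled from continuous functions and the (bounded) delay shifts, $N$ is continuous and bounded, while $\mathcal K(Nx-\overline{Nx})$ is compact by Arzel\`a-Ascoli; thus $N$ is $L$-compact on bounded sets and the Leray-Schauder (coincidence) degree is well defined.

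The first concrete step is to select the open set. By Lemma \ref{lema1}, every solution of $x'=\lambda\Phi(x)$ with $\lambda\in(0,1)$ satisfies $\varepsilon_0\le x_i(t)\le R_0$ for all $i$ and $t$; I would therefore fix constants $0<m<\varepsilon_0$ and $M>R_0$ and work on the cube $\Omega:=\{x\in C_T:\ m<x_i(t)<M,\ 1\le i\le N\}$, on whose boundary no solution of $Lx=\lambda Nx$ can lie. This supplies the a priori bound required by the continuation theorem and reduces the remaining task, on $\ker L\cong\R^N$, to the map $g$ restricted to the cube $(m,M)^N$ (identifying constant functions with points of $\R^N$): one must check that $g\neq0$ on $\partial\big((m,M)^N\big)$ and that $\deg_B\big(g,(m,M)^N,0\big)\neq0$.

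For the latter I would use the Poincar\'e-Miranda theorem, verifying the sign of each $g_i$ on the opposite faces $\{c_i=m\}$ and $\{c_i=M\}$, uniformly in the remaining coordinates $c_l\in[m,M]$. These sign conditions are precisely the estimates of Lemma \ref{lema1} specialized to constant arguments. On the lower face, the inequality drawn from (\ref{H0_syst}) (valid once $m<\tilde\varepsilon$), together with the nonnegativity of the cooperation terms, forces $\Phi_i(c)(t)>0$, hence $g_i(c)=-\overline{\Phi_i(c)}<0$; here the uniformity of $G^0_i$ in $x_l$ is what makes the bound hold for every admissible $c_l$. On the upper face, since $f\le e^{-1}$ bounds the production while (\ref{per}) makes $G_i$ dominate the cooperation (valid once $M>R_0$), one obtains $\Phi_i(c)(t)<0$ and $g_i(c)>0$. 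These are exactly the opposite-sign hypotheses of Poincar\'e-Miranda, so $g$ has a zero inside the cube and $\deg_B\big(g,(m,M)^N,0\big)=\pm1\neq0$.

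With the a priori bound and the nonzero degree in hand, the continuation theorem \cite{maw} produces a solution of $Lx=Nx$ in $\overline\Omega$, i.e.\ a $T$-periodic solution of (\ref{nich-sys-gral-2}) whose components obey $x_i(t)\ge\varepsilon_0>0$; in particular it lies in the interior of the positive cone and is non-trivial, as claimed. The main obstacle, I expect, is making the boundary sign conditions for $g$ fully rigorous uniformly in the non-extremal coordinates: one must verify that the constants $m,M$ inherited from the a priori bounds of Lemma \ref{lema1} sit below $\tilde\varepsilon$ and above $R_0$ respectively, and that the estimates persist for all values of the remaining coordinates in $[m,M]$, which is exactly where the assumed uniformity of the limits $G^0_i$, $G_{i,\infty}$ and $b^{\infty}_{i,l}$ in $x_l$ enters decisively.
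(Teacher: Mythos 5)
Your proposal is correct and follows essentially the same route as the paper: the coincidence formulation $Lx=Nx$ with Mawhin's continuation theorem is exactly the paper's homotopy $F_\lambda(x)=x-\overline{x}-\overline{\Phi(x)}-\lambda\mathcal{K}(\Phi(x)-\overline{\Phi(x)})$ in operator form, with the a priori bounds supplied by Lemma \ref{lema1} and the kernel-level degree computed for the averaged map $g$ on a cube of constants. Your Poincar\'e--Miranda sign verification --- $g_i<0$ on the faces $\{c_i=m\}$ via (\ref{H0_syst}) and $g_i>0$ on the faces $\{c_i=M\}$ via (\ref{per}) and $f\le e^{-1}$, with the uniformity of the limits in the remaining coordinates doing the work --- coincides with the paper's boundary estimates and its conclusion $\deg(F_1,\Omega,0)=\deg(g,U,0)=1$.
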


\begin{proof}
Let us define the homotopy, 
$$
F_{\lambda}(x)=x-\overline{x}-\overline{\Phi(x)}-\lambda\mathcal{K}(\Phi(x)-\overline{\Phi(x)}),\quad\lambda\in[0,1]
$$
and note that $x$ is a solution of $x'=\lambda\Phi(x)$ if and only if $x$ is a zero of $F_{\lambda}$. 
Let us set $\Omega=\{x\in X: \varepsilon\leq x_i(t)\leq R\}$, $\varepsilon<\varepsilon_0$ and $R>R_0$ from the previous lemma.
We will prove that $F_{\lambda}$ is non-zero at the boundary of $\Omega\times[0,1]$ and that the degree of $F_0$ on $0$ is non-zero. 
From the degree invariance, this will imply that $\deg(F_{1})\neq 0$ in $\Omega$, hence the existence of a non-trivial $T$-periodic solution of the system.

To this end, 
it suffices to show that $F_0$ restricted to the subspace of constant functions is homotopic to a translation of the identity of $\R^N$, which has a (unique) zero in $U:=\Omega\cap\R^N$. 
It is easy to verify that $F_0$, when evaluated in  constant functions, is exactly equal to $g$.
Therefore, it is enough to show that
\begin{equation*}
\left\{
\begin{aligned}
g_1(\varepsilon, x_2,..., x_N)&<0,\\ 
g_2(x_1, \varepsilon,..., x_N)&<0,\\
&\vdots\\
g_N(x_1, x_2,..., \varepsilon)&<0.\\
\end{aligned}
\right.
\end{equation*}

\begin{equation*}
\left\{
\begin{aligned}
g_1(R, x_2,..., x_N)&>0,\\ 
g_2(x_1, R,..., x_N)&>0,\\
&\vdots\\
g_N(x_1, x_2,..., R)&>0.\\  
\end{aligned}
\right.
\end{equation*}
This implies that the linear homotopy does not vanish in $\partial (U\times[0,1])$, hence that $\deg(g, U, 0)=1$. 
\medskip 

Let us firstly suppose that $x_1=\varepsilon\leq x_i\leq R$, $i=2,..., N$, then
  
\medskip 

\noindent$
g_1(\varepsilon, x_2,..., x_N)=$
$$=\dfrac{1}{T}\int_{0}^{T}\left(G_1(t,\varepsilon, x_2,...,x_N) - \sum_{l=2}^{N} b_{1,l}(t,x_l)- \sum_{j=1}^{k} p_{1,j}(t) f(\varepsilon)
\right)\,dt.
$$
{Thanks to (\ref{H0_syst}), we may choose $\varepsilon>0$ such that $e^{-\varepsilon}>1-\delta$ and}

$$
\dfrac{G_1(t,\varepsilon, x_2,...,x_N)}{\varepsilon} < \sum_{l=2}^{N} \dfrac{b_{1,l}(t,x_l)}{\varepsilon}+ e^{-\varepsilon}\sum_{j=1}^{k} p_{1,j}(t).
$$
{It follows that   $g_1(\varepsilon, x_2,..., x_N)<0$.}
It is clear that this can be done for each $g_i$, namely $g_i(x_1,..., x_{i-1}, \varepsilon, x_{i+1},..., x_N)<0$, $2\leq i\leq N$.

\medskip

Next, consider the case  $\varepsilon\leq x_i\leq R=x_1$. We have:
  
\medskip 
\noindent
$g_1(R, x_2,\ldots , x_N)  =$
$$=
\dfrac{1}{T}\int_{0}^{T}\left(G_1(t,R, x_2,...,x_N) - \sum_{l=2}^{N} b_{1,l}(t,x_l)- \sum_{j=1}^{k} p_{1,j}(t) f(R)
\right)\,dt.
$$ 

\noindent{From (\ref{per}), it is deduced, for $R\gg0$,}
$$
\frac{G_1(t, R, x_2,..., x_N)}{R}>\sum_{l=2}^{N}\frac{b_1(t, x_l
)}{R} +\beta.
$$ 

\noindent
{Therefore, choosing $\dfrac{\beta}{p^*}>e^{-R}$ it follows that  $g_1(R, x_2,..., x_N)>0$.
An analogous result holds for each $g_i(x_1,...x_{i-1}, R,..., x_N)$, $2\leq i\leq N$.}
\medskip

Finally, 
$$\deg(F_1, \Omega, 0)=\deg(F_0, \Omega, 0)=\deg(g, U, 0)=1$$
and so completes the proof. 
\end{proof}

\medskip

\section{Zero global attractor}

\label{atract}
The goal of  this section is to prove   that, under appropriate conditions, zero is a global attractor for the positive trajectories. 
To this end, we shall use another suitable guiding (Lyapunov-like) function. 
\medskip


\begin{thm}
Assume there exists $c>0$ such that $\sum_{j=1}^{k} p_{i,j}(t)\ge c> 0$  and  
\begin{equation}\label{atrae}
\frac {G_{i}(t,x) }{x_i}\geq \sum_{l=1, l\neq i}^{N}\frac{{b_{i}(t, x_l)}}{x_l} + \sum_{j=1}^{k} p_{i,j}(t),\quad\hbox{$i=1,...,N$}
\end{equation}   
for all $t$ hold, then every positive solution of  $(\ref{nich-sys-gral-2})$ with initial data $\varphi>0$,  tends to $0$ as $t\to +\infty$.  

\end{thm}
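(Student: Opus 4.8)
The plan is to use the maximal coordinate $u(t):=\max_i\{x_i(t)\}$ as a Lyapunov-like guiding function and to prove that $u(t)\to 0$, which is equivalent to the statement since $0< x_i(t)\le u(t)$ for every positive solution. Throughout I would exploit the elementary properties of $f(y)=ye^{-y}$: it satisfies $f(y)\le y$ for all $y\ge 0$, attains its global maximum $e^{-1}$ at $y=1$, and is strictly increasing on $[0,1]$.

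First I would record the basic differential inequality along the maximal coordinate. Since $\varphi>0$ forces $x_i(t)>0$ for all $t$, the quotients in (\ref{atrae}) are well defined. Fix $t$ and an index $i_0$ with $x_{i_0}(t)=u(t)$. Multiplying (\ref{atrae}) by $x_{i_0}(t)=u(t)>0$ and using $0<x_l\le u(t)$ to estimate $\sum_{l\ne i_0}b_{i_0,l}(t,x_l)=\sum_{l\ne i_0}\frac{b_{i_0,l}(t,x_l)}{x_l}\,x_l\le u(t)\sum_{l\ne i_0}\frac{b_{i_0,l}(t,x_l)}{x_l}$, I obtain $-G_{i_0}(t,x)+\sum_{l\ne i_0}b_{i_0,l}(t,x_l)\le -u(t)\sum_{j}p_{i_0,j}(t)$. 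Substituting this into the $i_0$-th equation of (\ref{nich-sys-gral-2}) yields the key estimate
$$x_{i_0}'(t)\le \sum_{j=1}^{k}p_{i_0,j}(t)\bigl[f(x_{i_0}(t-\tau_j))-u(t)\bigr],$$
valid for every active index $i_0$. As $u$ is locally Lipschitz, its upper right Dini derivative equals the maximum of $x_{i_0}'(t)$ over the active indices, so this inequality controls $D^+u(t)$.

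Next I would run a two-step \emph{level-trapping} argument based on this estimate and the hypothesis $\sum_{j}p_{i_0,j}(t)\ge c>0$. Using $f\le e^{-1}$, the bracket is $\le e^{-1}-u(t)$, so whenever $u(t)>e^{-1}$ one has $D^+u(t)\le c\,(e^{-1}-u(t))<0$; the standard fact that a locally Lipschitz function cannot cross a level upward where its Dini derivative is strictly negative then gives $\limsup_{t\to\infty}u(t)\le e^{-1}<1$ (and, in particular, global boundedness of the solution). Once $\bar u:=\limsup_{t\to\infty}u(t)<1$ is established, all coordinates are eventually $<1$, where $f$ is increasing. Then for any small $\varepsilon>0$ and $t$ large, $x_{i_0}(t-\tau_j)\le u(t-\tau_j)<\bar u+\varepsilon<1$ gives $f(x_{i_0}(t-\tau_j))\le f(\bar u+\varepsilon)$, whence $D^+u(t)\le c\,(f(\bar u+\varepsilon)-u(t))<0$ whenever $u(t)>f(\bar u+\varepsilon)$. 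The same trapping argument yields $\bar u\le f(\bar u+\varepsilon)$, and letting $\varepsilon\to0$ gives $\bar u\le f(\bar u)=\bar u\,e^{-\bar u}$. If $\bar u>0$ this forces $e^{-\bar u}\ge 1$, i.e. $\bar u\le 0$, a contradiction; hence $\bar u=0$ and $u(t)\to 0$.

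The main obstacle I anticipate is the rigorous handling of the nonsmooth guiding function together with the delay: one must justify that $D^+u(t)=\max\{x_{i_0}'(t):x_{i_0}(t)=u(t)\}$ and, above all, make precise the level-trapping comparison, namely that a locally Lipschitz function whose Dini derivative is bounded above by $c\,(\theta-u(t))<0$ on the set $\{u>\theta\}$ cannot keep $\limsup u$ above $\theta$. Everything else (positivity and global existence, and the algebra reducing the $b$- and $G$-terms) is routine. It is worth stressing that the first trapping step, giving $\bar u\le e^{-1}<1$, is essential: it is what places the trajectory in the region where $f$ is monotone, so that the second step can be closed by the scalar inequality $\bar u\le f(\bar u)$.
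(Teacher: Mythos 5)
Your proof is correct, and although it starts from the same guiding function $u(t)=\max_i x_i(t)$ and the same two elementary facts about $f$ (namely $f\le e^{-1}$ and $f(y)<y$ for $y>0$) as the paper, the architecture is genuinely different. The paper argues by a dichotomy: either $u$ decreases to some limit $\alpha$, in which case a case analysis ($0<\alpha<1$ versus $\alpha\ge1$, the latter requiring an auxiliary $\gamma<1$ with $\gamma e^{-\gamma}=(\alpha+\varepsilon)e^{-(\alpha+\varepsilon)}$) produces the linear estimate $u'\le c_0u$ with $c_0<0$, whence $\ln u\to-\infty$, contradicting $\alpha>0$; or $u'\ge0$ at arbitrarily large times, in which case the paper shows $u\le e^{-1}$ at any such time and bootstraps through the delays to obtain $u(t_n)\le f^{n-1}(e^{-1})\to0$. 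You compress both branches into the single differential inequality $D^+u(t)\le\sum_{j=1}^{k}p_{i_0,j}(t)\bigl[f(x_{i_0}(t-\tau_j))-u(t)\bigr]$ (your derivation of it from (\ref{atrae}) via $x_l\le u$ is exactly right) followed by two rounds of level-trapping, yielding first $\bar u\le e^{-1}$ and then $\bar u\le f(\bar u+\varepsilon)$ for every small $\varepsilon$, hence $\bar u\le f(\bar u)$ and $\bar u=0$; your one scalar inequality $\bar u\le f(\bar u)$ replaces the paper's infinite iteration $f^{n}(e^{-1})\to0$. Your route buys several things: you never need the paper's convergence dichotomy (the ``either $u$ converges or $u'\ge0$ recurrently'' step is the least rigorous point of the published argument), you avoid the $\alpha\ge1$ case entirely because the first trap already confines the asymptotic dynamics to $[0,e^{-1}]\subset[0,1)$ where $f$ is monotone, and by working with Dini derivatives you treat the nonsmoothness of $u$ honestly where the paper writes $u'(t)$ informally; what the paper's route buys in exchange is an explicit exponential decay rate ($u'\le c_0u$) in the monotone regime, which your $\limsup$ argument does not produce. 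The two lemmas you defer --- that $D^+u(t)=\max\{x_i'(t):x_i(t)=u(t)\}$ for the maximum of finitely many $C^1$ functions, and the trapping principle for continuous functions with negative upper Dini derivative on $\{u>\theta\}$ --- are both standard and correct as you state them, so the deferral is harmless.
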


\medskip

\begin{proof}
Let $x$ be a positive solution and define $u(t):=\max \{x_i(t)\}$.
Notice that, in order to prove our result, it is enough to show that $u\to 0$.
We claim that if $u\searrow \alpha$ then $\alpha=0$.
Firstly, let us suppose that $0<\alpha<1$. In this case, it is clear that since  $u_{\tau_j}(t)<1$ for $t\gg0$ and $x_{i,\tau_j}(t)\le u_{\tau_j}(t)$ we get that 

$$\frac {x_{i,\tau_j}(t)e^{-x_{i,\tau_j}(t)}}{u(t)}\le \frac {u_{\tau_j}(t)e^{-u_{\tau_j}(t)}}{u(t)}.$$

\noindent This implies, $$\lim_{t\to+\infty}\frac{f(x_{i,\tau_j}(t))}{u(t)}=e^{-\alpha}.$$

To analyze the case when $\alpha\ge 1$, let us choose $\varepsilon>0$ such that
$\alpha\le u(t)\le \alpha+\varepsilon$ for $t\gg0$. Set $\gamma< 1$ such that $\gamma e^{-\gamma}:= (\alpha+\varepsilon)e^{-(\alpha+\varepsilon)}$. If $x_{i,\tau_j}(t)\le \gamma$ we obtain

$$\frac {x_{i,\tau_j}(t)e^{-x_{i,\tau_j}(t)}}{u(t)}\le \frac {u(t)e^{-u(t)}}{u(t)}\le e^{-\alpha}.
$$

\noindent If $x_{i,\tau_j}(t)> \gamma$, $e^{-x_{i,\tau_j}(t)}\le e^{-\gamma}$ so

$$\frac {x_{i,\tau_j}(t)e^{-x_{i,\tau_j}(t)}}{u(t)}\le \frac {x_{i,\tau_j}(t)e^{-\gamma}}{u(t)}\le  
\frac {u_{\tau_j}(t)e^{-\gamma}}{u(t)}\to e^{-\gamma}.$$
Let us set $\varepsilon_0>0$ such that $\max\{e^{-\alpha}, e^{-\gamma}\}\leq (1-\varepsilon_0)$, then

$$u'(t)\leq u(t)\left(\sum_{i=1,i\ne l}^N \frac{b_i(t,x_l)}{u(t)} + (1-\varepsilon_0)\sum_{j=1}^{k} p_{i,j}(t) -\frac{G_i(t,x)}{u(t)}\right).
$$
Taking into account (\ref{atrae}), 
we obtain with $u'(t) \le c_0 u(t)$ with $c_0<0$.
In other words, 
$\ln{u(t)}'\le c_0$,
so  
$$
\int_{T+\tau*}^t \ln{u(t)}'\,dt\le \int_{T+\tau*}^t c_0 \,dt,
$$

\noindent and therefore 
$$\lim_{t\to+\infty} \ln u(t)\le \lim_{t\to+\infty} [ c_0 (t-T-\tau*) + \ln u(T+\tau*)]=-\infty.$$ 


\medskip

In order to complete the proof, we need to prove that $u$ converges to some $\alpha$.
If $u'(t_0)\geq 0$ for some $t_0$, we obtain
$u(t_0)\leq {e}^{-1}$. Indeed, since $u_{\tau_j}(t_0)e^{-u_{\tau_j}(t_0)}\leq e^{-1}$,

$$0\leq  u(t_0)\left(\sum_{i=1,i\ne l}^N \frac{b_i(t_0, x_l)}{u(t_0)} + \frac{\sum_{j=1}^{k} p_{i,j}(t_0)}{e\, u(t_0)} -\frac{G_i(t_0,x(t_0))}{ u(t_0)}\right), 
$$
whence
$$\frac{G_i(t_0,x(t_0))}{u(t_0)}\leq  \sum_{i=1,i\ne l}^N \frac{b_i(t_0, x_l)}{u(t_0)}+ \frac{\sum_{j=1}^{k} p_{i,j}(t_0)}{e\, u(t_0)}.
$$
Thus from (\ref{atrae}),
we obtain $u(t_0)\leq {e}^{-1}$. 
Recall that $\tau^{*}=\max_{j}\{\tau_j\}$. Then  $u_{\tau_j}(t)e^{-u_{\tau_j}(t)}\leq f({e}^{-1})\leq {e}^{-1}$ for $t\geq t_1+\tau^{*}$.
Again,
if $u'(t_2)\ge 0$ for some $t_2\geq t_1+\tau^{*}$, we obtain $u(t_2)\leq f({e}^{-1})$.  
Repeating the previous argument, we may choose a sequence $u_n:=u(t_n)\leq f^{n-1}({e}^{-1})$ for $t_n\geq t_{n-1}+\tau*$, so we get $$\lim_{n\to+\infty} u_n= \lim_{n\to+\infty} f^{n-1}({e}^{-1})=0.$$

\end{proof}


\section*{Acknowledgements}

\noindent This work was  supported by the projects CONICET [11220130100006CO], UBACyT [20020160100002BA] and TOMENADE [MATH-AmSud, 21-MATH-08].


\end{document}